\def\ps@pprintTitle{%
\let\@oddhead\@empty
\let\@evenhead\@empty
\def\@oddfoot{\centerline{\thepage}}%
\let\@evenfoot\@oddfoot}
\newtheorem{thm}{Theorem}[section]
\newtheorem{prop}[thm]{Proposition}
\newtheorem{definition}[thm]{Definition}
\begin{document}

\title{Existence and Non-existence of Half-Geodesics on $S^2$}
\author{Ian M. Adelstein} 
\address{Department of Mathematics, Dartmouth College, Hanover, NH 03755 United States}
\email{iadelstein[at]gmail[dot]com}
\begin{abstract} In this paper we study half-geodesics, those closed geodesics that minimize on any subinterval of length $L/2$, where $L$ is the length of the geodesic. For each nonnegative integer $n$, we construct Riemannian manifolds diffeomorphic to $S^2$ admitting exactly $n$ half-geodesics. Additionally, we construct a sequence of Riemannian manifolds, each of which is diffeomorphic to $S^2$ and admits no half-geodesics, yet which converge in the Gromov-Hausdorff sense to a limit space with infinitely many half-geodesics. 
\end{abstract}


\maketitle

%
\section{Introduction}

The existence of closed geodesics on Riemannian manifolds is one of the foundational questions in the field of global differential geometry. The classical result of Cartan, which states that on closed Riemannian manifolds every nontrivial homotopy class of curves contains a closed geodesic, addressed this problem for manifolds having nontrivial fundamental group. The question of existence on simply-connected manifolds would prove to be much more difficult. In 1905, Poincar\'e \cite{Poin} established the existence of a closed geodesic on any surface analytically equivalent to $S^2$. Birkhoff \cite{Birk} proved in 1917 the existence of a closed geodesic on any manifold homeomorphic to $S^n$. On surfaces homeomorphic to $S^2$, Lyusternik and Schnirelmann \cite{LS} in 1929 showed that there exist at least three geometrically distinct closed geodesics, and in 1991 Bangert \cite{Ban} and Franks \cite{Fra} showed that there exist infinitely many. 

The study of the existence of closed geodesics on arbitrary compact manifolds would yield exciting new techniques. The celebrated theorem of Lyusternik and Fet \cite{LF} in 1951 states that there exists a closed geodesic on any compact Riemannian manifold. They employed Morse's calculus of variations in showing that the energy functional $E: \Omega \to \mathbb{R}$ on the loop space of an arbitrary compact manifold has a critical point. Then in 1969 Gromoll and Meyer \cite{Grom} established that there are infinitely many geometrically distinct closed geodesics on compact simply-connected manifolds under a relatively weak topological restriction on the loop space. The question of the existence of infinitely many geometrically distinct closed geodesics on an arbitrary compact manifold remains open. 

A defining property of a geodesic is that it is a locally distance minimizing curve. It is clear that a nontrivial closed geodesic can never be a \emph{globally} distance minimizing curve. Indeed, a closed geodesic cannot minimize past half its length, as traversing the geodesic in the opposite direction always provides a shorter path. It is therefore natural to consider the largest interval on which a given closed geodesic is distance minimizing. This led Sormani \cite{Sor} to consider the notion of a $1/k$-geodesic.

\begin{definition} A $1/k$-geodesic on a compact Riemannian manifold (or more generally on a compact length space) is a closed geodesic $\gamma \colon S^1 \to M$ which is minimizing on all subintervals of length $l(\gamma)/k$, i.e. $$d(\gamma(t),\gamma(t+2\pi/k))=l(\gamma)/k \hspace{4mm} \forall t \in S^1$$ 
\end{definition}

Sormani introduced this notion of a 1/k geodesic to study the behavior of closed geodesics under deformations of Riemannian manifolds. She showed that the 1/k-geodesics persist under the Gromov-Hausdorff convergence of Riemannian manifolds (Theorem~\ref{conv2}). This result should be viewed in contrast to the behavior of arbitrary closed geodesics, which can disappear under Gromov-Hausdorff convergence (cf. \cite[Example 7.2]{Sor}).

The half-geodesics, those closed geodesics that minimize on any subinterval of length $l(\gamma)/2$, are of inherent geometric interest. They are the closed geodesics that have the maximal minimizing property. The existence of a half-geodesic provides an upper bound on the length $L$ of the shortest nontrivial closed geodesic; we have $L \leq  2 \, \text{diam}(M)$ when the set of half-geodesics on a manifold is nonempty. In \cite{Ade2}, the author studied half-geodesics by first providing a relationship between the half-geodesics and the Grove-Shiohama critical points of the distance function. 

We consider the question of the existence of half-geodesics on Riemannian manifolds. Analogous to the existence of closed geodesics on arbitrary compact manifolds, the non-simply-connected case is straightforward. Sormani \cite[Lemma 4.1]{Sor} shows that a closed geodesic which is the shortest among all non-contractible closed geodesics is a half-geodesic; i.e.,~the systole of a non-simply-connected manifold is always a half-geodesic.

Although Bangert and Franks showed that a manifold diffeomorphic to $S^2$ admits infinitely many closed geodesics, it is known that metrics on the $2$-sphere need not admit any half-geodesics. Indeed, using Clairaut's relation, Wing Kai Ho \cite{Ho} produced surfaces of revolution diffeomorphic to the $2$-sphere that do not admit half-geodesics. Additionally, Balacheff, Croke and Katz \cite{Bal} constructed a Zoll surface whose closed geodesics have length greater than twice the diameter, hence fails to admit a half-geodesic. The following result provides new examples of metrics on $S^2$ which do not admit half-geodesics, and shows that the absence of half-geodesics is not preserved under Gromov-Hausdorff convergence.

\begin{thm} \label{none} There exists a sequence of Riemannian manifolds, each of which is diffeomorphic to $S^2$ and admits no half-geodesics, which converge in the Gromov-Hausdorff sense to a limit space that has infinitely many half-geodesics. 
\end{thm}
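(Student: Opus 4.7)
The plan is to construct a sequence of smooth Riemannian metrics $(g_n)$ on $S^2$ that Gromov--Hausdorff converge to a singular limit space $X$ on which half-geodesics are abundant, while arranging that on each $M_n = (S^2, g_n)$ every closed geodesic $\gamma$ satisfies $l(\gamma) > 2\,\mathrm{diam}(M_n)$. The excerpt notes that any half-geodesic $\gamma$ on a manifold $M$ must satisfy $l(\gamma) \le 2\,\mathrm{diam}(M)$, so this strict length bound automatically forbids half-geodesics on each $M_n$.

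For the limit space, I would take $X$ to be the doubled flat disk of radius $r$ (the ``pillow''), obtained by gluing two copies of $\overline{D_r}\subset\mathbb{R}^2$ along their common boundary circle. This is a compact length space with $\mathrm{diam}(X) = 2r$. For each direction $v \in S^1$ the closed curve $\gamma_v$ that crosses one face of $X$ along $v$ and returns along $v$ on the opposite face is a closed geodesic of length $4r$: this can be verified by unfolding a neighborhood of any boundary seam point to a Euclidean disk, so that straight lines pass from one face to the other. A direct calculation minimizing the Euclidean distance through the seam gives $d(\gamma_v(t), \gamma_v(t+2r)) = 2r$ for all $t$, so each $\gamma_v$ is a half-geodesic; varying $v \in S^1$ produces uncountably many half-geodesics on $X$.

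The approximating manifolds $M_n$ are then constructed in the spirit of the Balacheff--Croke--Katz Zoll surface \cite{Bal} and the Ho profile-function construction \cite{Ho}: as smooth convex surfaces collapsing to the pillow whose shortest closed geodesic length strictly exceeds twice the diameter, with $\mathrm{diam}(M_n) \to 2r$. Gromov--Hausdorff convergence $M_n \to X$ is then verified via explicit $\varepsilon_n$-isometries, for instance projection onto the equatorial disk.

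The principal obstacle is achieving the strict inequality $l(\gamma) > 2\,\mathrm{diam}(M_n)$ for \emph{every} closed geodesic $\gamma$ simultaneously with Gromov--Hausdorff convergence to $X$. The most natural symmetric thickening --- an oblate ellipsoid collapsing to the pillow --- fails, because its meridians are themselves closed geodesics of length precisely $2\,\mathrm{diam}$ and are in fact half-geodesics. The heart of the construction is therefore a delicate perturbation that either destroys the meridian half-geodesics (by breaking axial symmetry so that they no longer close up) or introduces a shorter alternative route between their parametric antipodes (by pinching the profile near the equator), while still preserving GH-convergence to the pillow. Verifying that every remaining closed geodesic on $M_n$ satisfies $l(\gamma) > 2\,\mathrm{diam}(M_n)$ --- via a Clairaut-type classification in the axisymmetric case, or by a direct length comparison once symmetry is broken --- is the main technical step.
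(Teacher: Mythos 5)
Your identification of the limit space (the doubled flat disk, whose doubled diameters give a circle's worth of half-geodesics) matches the paper, and your sufficient criterion --- every closed geodesic of $M_n$ longer than $2\,\mathrm{diam}(M_n)$ --- would indeed forbid half-geodesics. But the proof has a genuine gap exactly where you flag ``the main technical step'': you never actually produce the manifolds $M_n$. You correctly observe that the obvious symmetric thickenings of the pillow fail (their meridians are half-geodesics), and then defer the resolution to an unspecified ``delicate perturbation'' whose key property --- that \emph{every} closed geodesic, including whatever replaces the meridians, exceeds $2\,\mathrm{diam}$ while the surface still collapses to the pillow --- is asserted rather than established. The Balacheff--Croke--Katz and Ho examples you invoke are not known to degenerate to the pillow, so citing them ``in spirit'' does not close the gap. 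Moreover, your criterion is stronger than necessary, which makes the construction harder than it needs to be: a surface can have closed geodesics of length at most $2\,\mathrm{diam}$ that still fail to minimize on half their length.

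The paper's route circumvents precisely this difficulty by changing the mechanism that kills half-geodesics. It approximates the disk not directly but through doubled regular polygons with an \emph{odd} number of sides: such a polygon double has no parallel edges, and a short cut-point argument shows it therefore admits no half-geodesics at all (Proposition~\ref{polygon}(1)). Smoothed boundaries of $\epsilon$-tubular neighborhoods $M_{2n+1,i}$ of $X_{2n+1}$ then inherit this non-existence for $i$ large, by combining Sormani's persistence theorem (Theorem~\ref{conv2}) --- a subsequence of half-geodesics on the $M_{2n+1,i}$ would converge to a half-geodesic or a point on $X_{2n+1}$ --- with Cheeger's uniform positive lower bound on closed geodesic lengths (Proposition~\ref{no_short_closed}), which rules out collapse to a point. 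A diagonal choice $\epsilon_n \to 0$ with the number of sides $2n+1 \to \infty$ yields manifolds with no half-geodesics converging to the doubled disk. If you want to salvage your approach, you would need to either carry out your perturbation with a genuine classification of its closed geodesics, or, more plausibly, adopt a soft persistence argument of the Sormani--Cheeger type so that non-existence on the singular approximants transfers to the smooth ones.
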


The following is a positive result on the existence of half-geodesics on Riemannian manifolds diffeomorphic to $S^2$.

\begin{thm} \label{exact} For each nonnegative integer $n$, there exist Riemannian manifolds diffeomorphic to $S^2$ admitting exactly $n$ half-geodesics.
\end{thm}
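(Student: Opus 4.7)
The case $n = 0$ is furnished by Theorem~\ref{none}, so I focus on $n \geq 1$. The fundamental tool I plan to use is the inequality $l(\gamma) \leq 2\,\mathrm{diam}(M)$, which every half-geodesic $\gamma$ must satisfy (as noted just after the definition of $1/k$-geodesic); it allows me to rule out a closed geodesic from being a half-geodesic by showing it is too long. The plan therefore has two pieces: first, produce $n$ short closed geodesics and verify each is a half-geodesic; second, show that every other closed geodesic on $M$ has length exceeding $2\,\mathrm{diam}(M)$.

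My model case is $n = 1$. I take a surface of revolution $(M_1, g_1)$ diffeomorphic to $S^2$ whose profile function has a single critical point, so that the unique horizontal closed geodesic is the equator. The profile is designed so that (a) the half-equator is the unique minimizing path between antipodal points on the equator, making the equator a half-geodesic, while (b) every meridian and every non-meridian closed geodesic arising from Clairaut's relation has length strictly greater than $2\,\mathrm{diam}(M_1)$. Condition (a) forces the profile to be tall enough that crossing a pole costs more than tracing the equator; condition (b) keeps the meridians long while the axial diameter of $M_1$ stays small.

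For $n \geq 2$, a surface of revolution diffeomorphic to $S^2$ admits an odd number $2k - 1$ of horizontal closed geodesics, arising from the alternating maxima and minima of the radius function. I would choose $k$ large enough in terms of $n$ and tune the profile so that exactly $n$ of these horizontal circles satisfy the half-geodesic condition, the remaining $2k - 1 - n$ do not, no meridian does, and every Clairaut-type closed geodesic has length exceeding $2\,\mathrm{diam}(M_n)$. The freedom in adjusting the heights, widths, and axial positions of the lobes provides ample room to select any desired subset of $n$ horizontal circles to be half-geodesics while excluding the rest; note that the rotational symmetry implies that either zero or infinitely many meridians are half-geodesics, so the construction must be chosen to kill all meridian candidates.

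The principal obstacle will be showing that none of the infinite family of non-horizontal, non-meridian closed geodesics produced by Clairaut's relation is a half-geodesic. For each admissible value of the conserved angular momentum, there is a geodesic oscillating between two bounded latitudes, and its length depends sensitively on the profile function. I plan to exploit the freedom in choosing the profile so that each such geodesic has length bounded below by $2\,\mathrm{diam}(M_n) + \epsilon$ for some $\epsilon > 0$, using direct estimates on the Clairaut integral. This uniform lower bound, together with the explicit verification for the $n$ chosen horizontal circles, will give the claimed count of exactly $n$ half-geodesics.
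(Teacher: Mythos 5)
Your overall strategy (surfaces of revolution, Clairaut's relation, and the length criterion $l(\gamma)\le 2\,\mathrm{diam}(M)$) is genuinely different from the paper's, which instead smooths doubled regular $2n$-gons, shows their meridians are half-geodesics by an inscribed-ellipse argument, and rules out extra half-geodesics via Cheeger's lower bound on closed geodesic length together with Sormani's persistence of $1/k$-geodesics under Gromov--Hausdorff convergence (with the $n=1$ case handled separately by Morse's theorem on nearly round triaxial ellipsoids). However, your proposal has a concrete error and a large unexecuted step. The error: you propose to eliminate meridians by arranging that they have length greater than $2\,\mathrm{diam}(M)$. This is impossible. Writing the metric as $dt^2+f(t)^2\,d\theta^2$ with $t\in[0,T]$ arclength along the profile, any path from pole to pole has length at least $\int|\dot t|\ge T$, so the meridian arc always minimizes between the poles; hence every meridian closed geodesic has length exactly $2\,d(N,S)\le 2\,\mathrm{diam}(M)$, and the length criterion can never exclude it. Meridians must instead be killed directly, e.g.\ by exhibiting the shortcut between the two equatorial points of a meridian that runs halfway around a thin equator --- which is available precisely in the tall-and-thin regime you need for condition (a), but is a different argument from the one you state.

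The larger issue is that the heart of your proof --- arranging that every non-horizontal Clairaut geodesic has length at least $2\,\mathrm{diam}(M)+\epsilon$, that exactly $n$ of the $2k-1$ horizontal circles are half-geodesics, and that the remaining $2k-1-n$ are not --- is asserted as a ``plan'' rather than carried out, and it is not routine. Near a horizontal geodesic that is a local maximum of $f$ (which you must keep short, since it is to be a half-geodesic), the oscillating Clairaut geodesics have lengths governed by the rotation number of the return map; whether short closed ones exist nearby depends delicately on the profile, and controlling this is essentially the content of Ho's construction cited in the paper, done there only for the non-existence case. Likewise, a thin waist (local minimum of $f$) separating two lobes tends automatically to be a half-geodesic, so deselecting the unwanted horizontal circles requires engineered shortcuts, not just ``freedom in adjusting the lobes.'' Without explicit estimates for at least one worked case, the claim of ``exactly $n$'' is not established. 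The paper's route avoids all of this by working with limit spaces where the half-geodesics can be completely classified (Proposition~\ref{polygon}) and importing that classification to nearby smooth manifolds via Theorem~\ref{conv2} and Proposition~\ref{no_short_closed}.
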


The paper proceeds as follows.  In Section~\ref{poly} we study the existence and non-existence of half-geodesics on doubled regular polygons (Proposition~\ref{polygon}). In Section~\ref{half} we construct sequences of Riemannian manifolds diffeomorphic to $S^2$ that converge in the Gromov-Hausdorff sense to these doubled regular polygons. We use a result by Cheeger \cite[Theorem 2.1]{Che2} to show that these sequences admit positive uniform lower bounds on the length of closed geodesics (Proposition~\ref{no_short_closed}). We show the existence and non-existence of half-geodesics on these sequences by applying Sormani's result (Theorem~\ref{conv2}) concerning the persistence of 1/k-geodesics under Gromov-Hausdorff convergence. The manifolds in these sequences are then used to prove Theorems~\ref{none} and~\ref{exact}.

%
%
\section{Half-Geodesics on Doubled Polygons}\label{poly}

In this section we study the half-geodesics on doubled regular $n$-gons $X_n$. These spaces are convex compact length spaces, not Riemannian manifolds, and will serve as the limiting spaces for the sequences of Riemannian manifolds that we construct in Section~\ref{half}. 

\begin{prop}\label{polygon} Let $X_n$ be a doubled regular $n$-gon.
\begin{enumerate}
\item If n is odd then $X_n$ has no half-geodesics.
\item If n is even then $X_n$ has exactly n/2 half-geodesics:~those curves which pass through the center of each face and perpendicularly through parallel edges. 
\end{enumerate}
\end{prop}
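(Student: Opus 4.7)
My plan is to construct the $n/2$ candidate half-geodesics explicitly when $n$ is even, verify they satisfy the half-geodesic condition, and then rule out all other closed geodesics by a length-versus-diameter comparison.

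When $n$ is even, the regular $n$-gon has $n/2$ pairs of parallel opposite edges. For each pair I define a curve $\gamma$ to enter copy $1$ perpendicularly at the midpoint of one edge, pass through the face center, exit perpendicularly at the midpoint of the parallel edge, cross to copy $2$, and return by the analogous perpendicular segment. Perpendicular incidence at each boundary crossing, together with the mirror identification of the two copies, makes $\gamma$ a closed geodesic of length $L = 2h$, where $h = 2a$ is the distance between the two parallel edges and $a$ is the apothem. To verify the half-geodesic condition I compare, for each $s$, the half-arc of $\gamma$ from $\gamma(s)$ to $\gamma(s+L/2)$ against any competing path: travelling along $\gamma$ yields $d(\gamma(s),\gamma(s+L/2)) \le L/2 = h$, while any other path must cross some shared edge $e$, so by unfolding copy $2$ across $e$ its length is at least the planar distance from $\gamma(s)$ to the reflection of $\gamma(s+L/2)$ across $e$. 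A direct calculation using the regular $n$-gon's geometry shows this planar distance is minimised (and equals $h$) exactly at the two edges of the parallel pair.

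For uniqueness in the even case and the non-existence claim in the odd case, I use that any half-geodesic satisfies $L/2 = d(\gamma(0),\gamma(L/2)) \le \mathrm{diam}(X_n)$, so any closed geodesic with $L > 2\,\mathrm{diam}(X_n)$ is ruled out. Closed geodesics on $X_n$ correspond to closed billiard trajectories in the regular $n$-gon, where the parity of edge-crossings forces an odd-period billiard orbit to be traversed twice before producing a closed geodesic (so, e.g., the Fagnano orbit in the triangle yields a doubled closed geodesic whose antipode at $L/2$ collides with the basepoint). The perpendicular period-$2$ orbits exist exactly when $n$ is even and yield precisely the geodesics constructed above; every other closed billiard orbit -- the Fagnano-type orbits that are the shortest candidates for odd $n$, and the rational-slope orbits such as the $(1,2)$-orbit for the square -- has length such that the (possibly doubled) geodesic exceeds $2\,\mathrm{diam}(X_n)$.

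The main obstacle is this final length bound: establishing that every non-perpendicular closed billiard orbit in the regular $n$-gon has length exceeding $2\,\mathrm{diam}(X_n)$ after doubling if necessary. I expect this to require a combinatorial case analysis organised by the cyclic sequence of edges hit, together with a careful comparison between the unfolded length of the orbit and the explicit diameter of $X_n$, with special care in the odd case to exclude all possible short symmetric orbits.
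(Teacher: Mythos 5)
Your construction and verification of the $n/2$ perpendicular curves in the even case is sound and is essentially the paper's argument in different clothing: your ``unfold copy $2$ across the crossed edge and compare with the reflected point'' bound is the same inequality the paper obtains by inscribing an ellipse with foci $\gamma(s)$, $\gamma(s+L/2)$ and major axis $L/2$ and observing that the $n$-gon is tangent to it along the major axis, so every other edge point has combined distance at least $L/2$ from the two foci. (You should also note that a competitor crossing several edges is handled by bounding its length below by $|p-x|+|x-q|$ at its first edge crossing $x$; the single-crossing reflection picture alone does not literally cover that case.)

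The genuine gap is in your uniqueness and odd-case argument. You propose to classify all closed geodesics on $X_n$ as periodic billiard orbits in the regular $n$-gon and discard every non-perpendicular one by the necessary condition $L\le 2\,\mathrm{diam}(X_n)$. This is not carried out, and as a strategy it has two defects. First, periodic billiard orbits in a regular $n$-gon form an infinite family (infinitely many combinatorial types, with lengths that do not admit a uniform gap above $2\,\mathrm{diam}$), so ``a combinatorial case analysis organised by the cyclic sequence of edges hit'' is not a finite check. Second, the length sieve is too coarse even where it can be computed: for the doubled unit square the $(1,1)$ orbit through the four edge midpoints is a closed geodesic of length $2\sqrt2$, while $\mathrm{diam}(X_4)=\sqrt2$, so $L=2\,\mathrm{diam}(X_4)$ and the strict inequality you rely on fails; this orbit is not a half-geodesic, but your criterion does not exclude it. The paper avoids classification entirely by exploiting what it means to be a half-geodesic: the two halves of $\gamma$ between $\gamma(0)$ and $\gamma(\pi)$ are distinct minimizers, so $\gamma(\pi)$ is a cut point of $\gamma(0)$; taking $\gamma(0)$ to be an edge point forces $\gamma(\pi)$ to be an edge point too, and the requirement that the two minimizing halves concatenate to a geodesic forces perpendicular crossings of a pair of parallel edges. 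This kills the odd case immediately (no parallel edges exist) and reduces the even case to straight segments between parallel edges, after which the observation that an off-center such curve fails to minimize between $\gamma(\pi/2)$ and $\gamma(3\pi/2)$ (a shorter path crosses a third edge) pins the curve to the face centers. You need an argument of this intrinsic, necessary-condition type; the billiard-length route as stated does not close.
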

\begin{proof} 

We first note that a half-geodesic $\gamma \colon S^1 \to X_n$ contains an edge point because closed geodesics on $X_n$ contain points from each face and therefore cross an edge (crossing at a vertex would violate the local length minimization property of the geodesic). We label this edge point $p=\gamma(0)$ and parameterize the curve $\gamma$ by a circle of length $2\pi$ so that $p$ and $q=\gamma(\pi)$ are at distance $l(\gamma)/2$. The two halves of $\gamma$ minimize between $p$ and $q$ which shows that $q$ is a cut point of $p$ and therefore also an edge point. The fact that these two halves of $\gamma$ connect to form a closed geodesic implies that $p$ and $q$ are on parallel (i.e.~opposite) edges and that $\gamma$ crosses these edges perpendicularly. As doubled regular $n$-gons with an odd number of sides do not have parallel edges we conclude that these spaces do not admit half-geodesics.

Let $\gamma \colon S^1 \to X_n$ be a half-geodesic on a doubled regular $n$-gon with an even number of sides. As above we have that $\gamma$ is the concatenation of the two straight line paths connecting a pair of points on parallel edges. We show that $\gamma$ contains the center point of each face. Suppose this is not the case. Then letting $\gamma(0)$ and $\gamma(\pi)$ be the edge points of $\gamma$ we see that $\gamma(\pi/2)$ and $\gamma(3\pi/2)$ are a pair of off-center points on opposite faces which lie exactly halfway between the parallel edges, and are therefore connected via a shorter curve which crosses a third edge of the $n$-gon. We conclude that $\gamma$ does not minimize between this pair of points and is therefore not a half-geodesic.

We therefore identify $n/2$ candidate half-geodesics: those curves which pass through the center of each face and perpendicularly through the midpoints of parallel edges. We show that each of these curves $\gamma \colon S^1 \to X_n$ is a half-geodesic, i.e.~that $\gamma$ minimizes between $p=\gamma(t)$ and $q=\gamma(t+\pi)$ for every $t \in S^1$. First note that if $p$ and $q$ are edge points then $\gamma$ minimizes between them. If the points $p$ and $q$ are on opposite faces then any shorter curve joining them would have to cross an edge of the $n$-gon at a point distinct from the two edge points of $\gamma$. Consider for a moment a single-sided regular $n$-gon where $p$ and $q$ have both been placed as they were on their respective faces (see Figure~\ref{fig1}). We inscribe an ellipse in this $n$-gon with focal points $p$ and $q$ and major axis length $l(\gamma)/2$. The $n$-gon is tangent to the ellipse along its major axis, and we see that all of the other edge points are at a combined distance greater than or equal to $l(\gamma)/2$ from $p$ and $q$. Back on the doubled $n$-gon, we conclude that a shorter curve joining $p$ and $q$ is impossible. We have therefore shown that $\gamma$ is a half-geodesic.  
\end{proof}

\begin{figure}[!htb]\centering
\includegraphics[width=.65\textwidth]{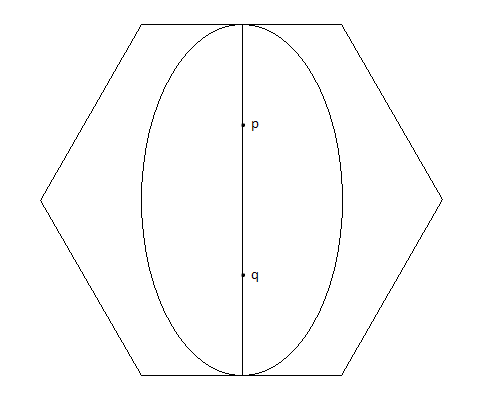}
\caption{A single-sided regular $n$-gon. The ellipse with focal points $p$ and $q$ and major axis length $l(\gamma)/2$ is tangent to the $n$-gon along its major axis. All other edge points of the $n$-gon are at a combined distance greater than $l(\gamma)/2$ from $p$ and $q$.} 
\label{fig1}
\end{figure}

%
%
%
\section{Construction of the Surfaces}\label{half}

The doubled regular $n$-gons studied in Proposition~\ref{polygon} are convex compact length spaces. We study the existence of half-geodesics on Riemannian manifolds by first constructing for each $n \geq 3$ a sequence of piecewise smooth manifolds converging in the Gromov-Hausdorff sense to a doubled regular $n$-gon $X_n$. Start by choosing a sequence $\epsilon_{n,i} \to 0$ with $\epsilon_{n,i} > 0$ and let $T_{\epsilon_{n,i}}(X_n)$ be the $\epsilon_{n,i}$ tubular neighborhood of $X_n$. Then let $Y_{n,i} = \partial T_{\epsilon_{n,i}}(X_n)$ be the boundary of this $\epsilon_{n,i}$ tubular neighborhood. We choose the $\epsilon_{n,i}$ sufficiently small so that the $Y_{n,i}$ consist of two flat faces each isometric to a face of $X_n$, joined by half-cylinders along the edges and $1/n$-sections of spheres between the vertices (see Figure~\ref{fig2}). As submanifolds of $\mathbb{R}^3$ the $Y_{n,i}$ are piecewise smooth surfaces. We note that the $Y_{n,i}$ converge in the Gromov-Hausdorff sense to $X_n$. 

\begin{figure}[!htb]\centering
    \includegraphics[width=.73\textwidth]{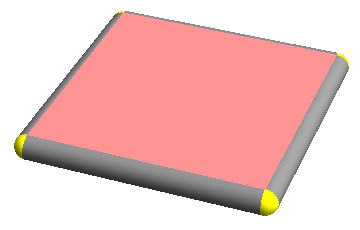}
    \caption{A sample $Y_{4,i}$. We see the pink top face, the grey half-cylinders joining the edges, and the yellow $1/4$-sections of spheres between the vertices.} 
    \label{fig2}
    \end{figure}

Next let $M_{n,i}$ be Riemannian manifolds obtained by smoothing the $Y_{n,i}$ along the boundaries of their $n$ spherical regions. We smooth symmetrically, maintaining two equivalent hemispheres (the top and bottom halves of the surface) separated by an equator curve which passes through the middle of each of the cylindrical and spherical regions. The resultant $M_{n,i}$ are Riemannian manifolds which converge in the Gromov-Hausdorff sense to $X_n$. 

\begin{prop}\label{no_short_closed} For each $n\geq3$ the sequence $\{M_{n,i}\}_{i=1}^{\infty}$ constructed above admits a positive uniform lower bound on the length of closed geodesics.
\end{prop}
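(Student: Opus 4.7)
The plan is to invoke the form of Cheeger's theorem that yields a uniform positive lower bound on the length of the shortest closed geodesic of a compact Riemannian manifold, depending only on an upper bound for $|K|$, an upper bound for the diameter, and a lower bound for the volume. To apply this to the sequence $\{M_{n,i}\}_{i=1}^\infty$, I would verify that each of these three quantities is controlled uniformly in $i$.

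The diameter bound is almost immediate from the Gromov--Hausdorff convergence: since $M_{n,i} \to X_n$ and $X_n$ is compact, $\mathrm{diam}(M_{n,i})$ converges to $\mathrm{diam}(X_n)$ and is in particular uniformly bounded in $i$. The volume lower bound is similarly accessible: by construction each $M_{n,i}$ contains two nearly flat faces, each essentially isometric to the regular $n$-gon, so $\mathrm{area}(M_{n,i}) \geq 2\,\mathrm{area}(X_n) - o(1)$, which is bounded below by a positive constant independent of $i$.

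The main obstacle is the curvature bound. On the flat faces and on the flat half-cylinders of $Y_{n,i}$ the Gaussian curvature vanishes, but on the $1/n$-sections of spheres of radius $\epsilon_{n,i}$ it equals $\epsilon_{n,i}^{-2}$, which diverges as $i \to \infty$. So a naive application of Cheeger's theorem to the raw $Y_{n,i}$ fails. The smoothing step producing $M_{n,i}$ therefore has to be arranged carefully---presumably by a sufficiently global modification that redistributes the curvature away from the small spherical caps, while still preserving the symmetric pillow shape with its equator of intersection between top and bottom hemispheres---so that $|K_{M_{n,i}}| \leq \Lambda$ holds uniformly in $i$ and the Gromov--Hausdorff convergence to $X_n$ is retained.

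Once such a uniform triple $(\Lambda, D, V)$ is in hand, Cheeger's theorem applied with these constants produces a single positive constant $\ell_0 = \ell_0(\Lambda, D, V)$ bounding the length of every closed geodesic of every $M_{n,i}$ from below, which is exactly the conclusion of the proposition. I expect the construction and verification of the uniform curvature bound to be where essentially all the technical work lies; the other two bounds and the invocation of Cheeger's theorem itself are then essentially formal.
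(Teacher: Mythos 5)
Your diameter and volume bounds are fine (and match the paper's), but the route you take through the curvature hypothesis contains a genuine gap: the modification you propose in the third paragraph is impossible. You suggest smoothing ``globally'' so that $|K_{M_{n,i}}| \leq \Lambda$ uniformly in $i$ while retaining Gromov--Hausdorff convergence to $X_n$. But $X_n$ has cone points at the $n$ doubled vertices, each with cone angle $2(n-2)\pi/n < 2\pi$, so an angle deficit of $4\pi/n$ must concentrate there. Concretely: fix a small $r>0$ and look at the metric disc of radius $r$ about a point of $M_{n,i}$ near a vertex. As $i \to \infty$ its boundary length approaches that of the corresponding disc in $X_n$, namely about $2(n-2)\pi r/n$, so by Gauss--Bonnet the total curvature inside is forced to be close to $4\pi/n > 0$; since this holds for every small $r$ and the area of the disc is $O(r^2)$, a uniform bound $K \leq \Lambda$ would give total curvature $\leq \Lambda \cdot O(r^2) \to 0$, a contradiction. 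So $\sup K_{M_{n,i}} \to \infty$ no matter how the smoothing is arranged, and the two-sided-bound version of Cheeger's theorem can never be applied uniformly along the sequence. (The same obstruction already shows up, as you noted, in the unsmoothed $Y_{n,i}$, where $K = \epsilon_{n,i}^{-2}$ on the spherical pieces.)

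The paper avoids this entirely by invoking a different form of Cheeger's result (\cite[Theorem 2.1]{Che2}): a positive lower bound on the length of closed geodesics follows from $\mathrm{diam}(M) < d$, $\mathrm{vol}(M) > v$, and only a \emph{lower} sectional curvature bound $k \geq H$ --- no upper curvature bound is needed for closed geodesics (as opposed to the injectivity radius estimate, which is what your $|K| \leq \Lambda$ version controls). Since $Y_{n,i}$ is the boundary of the convex body $T_{\epsilon_{n,i}}(X_n) \subset \mathbb{R}^3$ and the smoothing preserves convexity, each $M_{n,i}$ has $K \geq 0$, so one may take $H = 0$ uniformly; combined with your (correct) uniform diameter and volume bounds, the conclusion follows with no technical work on the smoothing at all. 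So the ``main obstacle'' you identify is real for the theorem you quoted, but it is dissolved rather than overcome: the fix is to quote the right theorem, not to redesign the surfaces.
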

\begin{proof} We apply the following result by Cheeger \cite[Theorem 2.1]{Che2}: given constants $d, v >0$ and $H$ there exists a positive constant $c_m(d,v,H)$ such that if $M$ is a Riemannian $m$-manifold with $\text{diam}(M)<d$, $\text{vol}(M)>v$ and sectional curvature $k \geq H$ then every closed geodesic in $M$ has length greater than $c_m(d,v,H)$. For each $n\geq3$ the sequence $\{M_{n,i}\}_{i=1}^{\infty}$ admits a uniform upper bound of $2\, \text{diam}(X_n)$ on diameter and uniform lower bounds of $\text{vol}(X_n)$ on volume and $H=0$ on sectional curvature so that the Cheeger result applies.
\end{proof}

We will use the following convergence result due to Sormani to study the half-geodesics on the sequences $\{M_{n,i}\}_{i=1}^{\infty}$.

\begin{thm}[\cite{Sor}, Theorem 7.1]\label{conv2} Let $M_i$ be a sequence of compact Riemannian manifolds converging in the Gromov-Hausdorff sense to a compact length space $X$. Let $\gamma_i \colon S^1 \to M_i$ be a sequence of $1/k$-geodesics. Then a subsequence of the $\gamma_i$ converge point-wise to a continuous curve $\gamma \colon S^1 \to X$, and $\gamma$ is either a $1/k$-geodesic or trivial. 
\end{thm}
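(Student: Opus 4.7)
The plan is to combine an Arzel\`a--Ascoli argument adapted to Gromov--Hausdorff convergence with the rigid distance identity built into the $1/k$-geodesic condition. First I would reparameterize each $\gamma_i$ with constant speed on $S^1\cong \mathbb{R}/2\pi\mathbb{Z}$. Setting $L_i = l(\gamma_i)$, the $1/k$-geodesic condition gives $L_i/k = d_i(\gamma_i(t),\gamma_i(t+2\pi/k)) \le \mathrm{diam}(M_i)$, and since $\mathrm{diam}(M_i) \to \mathrm{diam}(X) < \infty$ the $L_i$ are uniformly bounded. Each $\gamma_i$ is then $(L_i/2\pi)$-Lipschitz, and after passing to a subsequence I may assume $L_i \to L \in [0,k\,\mathrm{diam}(X)]$.

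Next I would invoke the Gromov--Hausdorff analogue of Arzel\`a--Ascoli: choose $\epsilon_i$-approximations $f_i\colon M_i\to X$ with $\epsilon_i\to 0$ and consider $\tilde\gamma_i := f_i\circ\gamma_i \colon S^1\to X$. These are equicontinuous (with modulus $C|s-t|+\epsilon_i$ for a uniform $C$) and land in the compact space $X$, so a subsequence converges uniformly to a continuous $\gamma\colon S^1\to X$. The distortion bound then gives
\[
d_X(\gamma(s),\gamma(t)) \;=\; \lim_i d_X(\tilde\gamma_i(s),\tilde\gamma_i(t)) \;=\; \lim_i d_i(\gamma_i(s),\gamma_i(t)),
\]
which realizes $\gamma$ as the GH pointwise limit of the $\gamma_i$. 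Specializing this with $t$ and $t+2\pi/k$ yields the key identity $d(\gamma(t),\gamma(t+2\pi/k)) = L/k$, independent of $t$.

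Finally I would compute $l(\gamma)$. The subdivision $t_j = 2\pi j/k$, $j=0,\dots,k-1$, combined with the identity above, yields $l(\gamma) \ge \sum_j d(\gamma(t_j),\gamma(t_{j+1})) = L$, while standard lower semicontinuity of length under uniform convergence gives $l(\gamma) \le \liminf_i L_i = L$; hence $l(\gamma) = L$. If $L=0$ then $\gamma$ is constant, i.e.\ trivial. If $L>0$, then each arc $\gamma|_{[t,t+2\pi/k]}$ has length $L/k = d(\gamma(t),\gamma(t+2\pi/k))$, so every such arc is a minimizing length-space geodesic; constant-speed parameterization descends from the $\gamma_i$ via the same distance formula, making $\gamma$ a closed geodesic satisfying the $1/k$-geodesic condition. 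The step I expect to require the most care is the Arzel\`a--Ascoli argument in the GH setting, specifically managing the distortion from the $\epsilon_i$-approximations so that both the continuity of $\gamma$ and the preservation of distances pass cleanly to the limit.
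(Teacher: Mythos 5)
This statement is quoted from Sormani (\cite{Sor}, Theorem 7.1) and the paper gives no proof of it, so there is no in-paper argument to compare against; judged on its own, your proof is essentially the standard (and essentially Sormani's) argument: uniform length bounds from the $1/k$-condition, the Grove--Petersen version of Arzel\`a--Ascoli to extract a limit curve, passage of the distance identity $d(\gamma(t),\gamma(t+2\pi/k))=L/k$ to the limit, and the subdivision argument forcing each parameter arc of length $2\pi/k$ to be minimizing. One step deserves a cleaner justification: the maps $\tilde\gamma_i=f_i\circ\gamma_i$ need not be continuous (an $\epsilon_i$-approximation is generally not continuous, let alone Lipschitz), so ``lower semicontinuity of length under uniform convergence'' does not literally apply to them and they do not have length $L_i$ in $X$. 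The upper bound $l(\gamma)\le L$ should instead be read off from the distance identity you already established: $d_X(\gamma(s),\gamma(t))=\lim_i d_i(\gamma_i(s),\gamma_i(t))\le \lim_i (L_i/2\pi)\,|s-t|=(L/2\pi)\,|s-t|$, so $\gamma$ is $(L/2\pi)$-Lipschitz and hence $l(\gamma)\le L$. With that substitution the argument is complete, including the constant-speed claim, since for $|s-t|\le 2\pi/k$ one gets the exact equality $d_X(\gamma(s),\gamma(t))=(L/2\pi)|s-t|$ in the limit.
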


\begin{prop}\label{odd3} For each odd integer $n \geq 3$ only finitely many of the manifolds in the sequence $\{M_{n,i}\}_{i=1}^{\infty}$ constructed above can admit a half-geodesic.
\end{prop}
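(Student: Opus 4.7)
The plan is to argue by contradiction, combining Sormani's convergence result (Theorem~\ref{conv2}) with the two preceding propositions. Suppose, contrary to the claim, that infinitely many of the $M_{n,i}$ carry a half-geodesic; after passing to a subsequence I may assume $\gamma_i \colon S^1 \to M_{n,i}$ is a half-geodesic for every $i$. Since a half-geodesic is in particular a $1/2$-geodesic and the $M_{n,i}$ converge in the Gromov-Hausdorff sense to $X_n$, Theorem~\ref{conv2} yields a further subsequence whose $\gamma_i$ converge pointwise to a continuous limit curve $\gamma \colon S^1 \to X_n$ that is either a half-geodesic or trivial (i.e.~constant).

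Since $n$ is odd, Proposition~\ref{polygon}(1) excludes the first alternative: $X_n$ admits no half-geodesics. Hence $\gamma \equiv x$ for some point $x \in X_n$. To rule this out I would combine the defining equation of a half-geodesic, $d_{M_{n,i}}(\gamma_i(t), \gamma_i(t+\pi)) = l(\gamma_i)/2$ for every $t \in S^1$, with the uniform lower bound $l(\gamma_i) \geq c > 0$ furnished by Proposition~\ref{no_short_closed}. Fixing $t \in S^1$ and choosing $\epsilon_i$-approximations $f_i \colon M_{n,i} \to X_n$ with $\epsilon_i \to 0$, pointwise convergence gives $f_i(\gamma_i(t)) \to x$ and $f_i(\gamma_i(t+\pi)) \to x$, so $d_{X_n}(f_i(\gamma_i(t)), f_i(\gamma_i(t+\pi))) \to 0$. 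On the other hand, this quantity is at least $d_{M_{n,i}}(\gamma_i(t), \gamma_i(t+\pi)) - 2\epsilon_i = l(\gamma_i)/2 - 2\epsilon_i \geq c/2 - 2\epsilon_i$, which is eventually bounded below by $c/4$. This is the desired contradiction.

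The structural content of the argument is that Theorem~\ref{conv2} leaves only two possibilities for the limit curve, and each is killed by one of the preceding propositions: the geometric classification of Proposition~\ref{polygon} rules out a nontrivial half-geodesic limit in the odd case, and the Cheeger-based uniform length bound of Proposition~\ref{no_short_closed} rules out a constant limit. I do not anticipate a serious obstacle beyond this bookkeeping; the only point requiring care is translating the pointwise convergence of Theorem~\ref{conv2} across the varying ambient spaces $M_{n,i}$, which is handled by the standard $\epsilon_i$-approximation formalism used throughout the Gromov-Hausdorff literature.
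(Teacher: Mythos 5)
Your proof is correct and is exactly the argument the paper intends: the paper's own proof consists of the single line ``This follows immediately from Proposition~\ref{polygon}(1), Proposition~\ref{no_short_closed} and Theorem~\ref{conv2},'' and you have simply filled in the details of how those three ingredients combine (the classification ruling out a nontrivial limit, the Cheeger bound ruling out a trivial one).
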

\begin{proof} This follows immediately from Proposition~\ref{polygon}(1), Proposition~\ref{no_short_closed} and Theorem~\ref{conv2}.
\end{proof}

\begin{proof}[Proof of Theorem~\ref{none}] We choose a sequence $\epsilon_n \to 0$ such that the smoothed Riemannian manifolds $M_n \approx \partial T_{\epsilon_n}(X_{2n+1})$ do not admit any half-geodesics. The sequence $M_n$ converges in the Gromov-Hausdorff sense to a doubled disk, a compact length space admitting infinitely many half-geodesics.
\end{proof}

\begin{prop}\label{even} For each $n \geq 2$ there exists a sequence of Riemannian manifolds diffeomorphic to $S^2$ each of which admits exactly $n$ half-geodesics.
\end{prop}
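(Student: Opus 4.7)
The plan is to show that the sequence $\{M_{2n,i}\}_{i=1}^{\infty}$ from Section~\ref{half}, which Gromov-Hausdorff converges to $X_{2n}$, admits exactly $n$ half-geodesics for all sufficiently large $i$. Since Proposition~\ref{polygon}(2) gives exactly $n$ half-geodesics on the limit space $X_{2n}$, the desired sequence will be the tail of $\{M_{2n,i}\}$.

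For the lower bound, I would exploit the symmetry of the construction. The manifolds $M_{2n,i}$ inherit from $X_{2n}$ the $n$ reflective isometries whose axes on $X_{2n}$ are the $n$ half-geodesics. On each $M_{2n,i}$, the fixed-point set of such a reflection is a smooth closed curve $\gamma_{i,j}$, which is automatically a closed geodesic, and the $\gamma_{i,j}$ converge pointwise to the $j$-th half-geodesic $\gamma_j$ of $X_{2n}$. To promote $\gamma_{i,j}$ to a half-geodesic on $M_{2n,i}$ for large $i$, I would rely on the quantitative strictness implicit in the ellipse argument in Proposition~\ref{polygon}(2): on $X_{2n}$ every path competing with the minimizing half of $\gamma_j$ is longer by a uniform amount, a gap that is stable under sufficiently small Gromov-Hausdorff perturbation of the ambient metric.

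For the upper bound, I would argue by contradiction. Suppose infinitely many $M_{2n,i}$ admit a half-geodesic $\sigma_i$ distinct from $\gamma_{i,1},\ldots,\gamma_{i,n}$. By Proposition~\ref{no_short_closed} and the uniform diameter bound, the lengths $l(\sigma_i)$ are pinched between two positive constants, so Theorem~\ref{conv2} yields a subsequential pointwise limit $\sigma$ which is itself a half-geodesic of $X_{2n}$. By Proposition~\ref{polygon}(2), $\sigma$ must coincide with some $\gamma_j$.

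The main obstacle is ruling out the collapse of distinct half-geodesics of $M_{2n,i}$ onto the same limit $\gamma_j$. I would address this by working inside a thin tubular neighborhood of $\gamma_j$, where for large $i$ the geometry of $M_{2n,i}$ is a small perturbation of a flat strip joined to its mirror image across the equator curve. In this neighborhood the reflection fixing $\gamma_j$ acts as an isometry, so a non-symmetric candidate $\sigma_i$ would come paired with its reflected image; a direct comparison with the flat-strip model would then preclude both members of such a pair from minimizing between their antipodal pairs, while a symmetric candidate must coincide with $\gamma_{i,j}$. Combining this local rigidity with the lower-bound construction yields exactly $n$ half-geodesics on $M_{2n,i}$ for all $i$ sufficiently large, and the corresponding tail of $\{M_{2n,i}\}$ furnishes the required sequence.
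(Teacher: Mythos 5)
Your overall strategy---pass to the tail of $\{M_{2n,i}\}_{i=1}^{\infty}$, obtain the lower bound from the $n$ meridians, and obtain the upper bound from Theorem~\ref{conv2} plus a local analysis near the limiting meridian---is the same as the paper's. The genuine gap is in your lower bound. You claim that on $X_{2n}$ every competitor to the minimizing half of $\gamma_j$ is longer ``by a uniform amount,'' and that this gap is stable under small Gromov--Hausdorff perturbation. Neither claim holds. First, there is no uniform gap: in the ellipse argument a competitor may cross an edge at a point $x$ arbitrarily close to the tangent point where $\gamma_j$ itself crosses, and the excess $d(p,x)+d(x,q)-l(\gamma)/2$ tends to $0$ as $x$ approaches that tangent point. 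Second, and more structurally, being a half-geodesic is not a Gromov--Hausdorff-stable property---Theorem~\ref{none} is precisely a counterexample to such stability (manifolds with no half-geodesics converging to a space with infinitely many). A purely perturbative argument can at best show the meridians of $M_{2n,i}$ are \emph{almost} minimizing on half-length subintervals, which is not enough. The paper instead exploits that the smoothing of $Y_{2n,i}$ occurs only near the spherical (vertex) regions, so that on the flat faces and cylindrical edges nothing is perturbed and the \emph{exact} ellipse argument of Proposition~\ref{polygon} runs on $M_{2n,i}$ itself; no limit or gap-stability is invoked, and the meridians are half-geodesics for every $i$, not merely for large $i$.

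Your upper bound is essentially the paper's up to the last step: Proposition~\ref{no_short_closed} and Theorem~\ref{conv2} force a putative extra half-geodesic $\sigma_i$ to subconverge to some meridian $\gamma_j$ of $X_{2n}$, and one must rule out distinct half-geodesics of $M_{2n,i}$ collapsing onto $\gamma_j$. The paper does this without any reflection pairing: uniform convergence confines $\sigma_i$, for large $i$, to the region where $M_{2n,i}$ consists of flat faces and half-cylinders, so $\sigma_i$ must be a concatenation of straight segments crossing parallel cylindrical edges perpendicularly; if it is not the meridian it passes off-center through a face, and the off-center-points argument of Proposition~\ref{polygon} exhibits a pair of antipodal parameters between which it fails to minimize. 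Your reflection-invariance dichotomy is both heavier and incomplete as stated: a reflection-invariant closed geodesic need not be pointwise fixed by the reflection, and the mere existence of a mirror partner of $\sigma_i$ does not by itself produce a shorter competitor. The direct off-center comparison closes the argument; I would replace your local-rigidity step with it.
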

\begin{proof} 

Let $X_{2n}$ be a doubled regular $2n$-gon. Recall from Proposition~\ref{polygon} that $X_{2n}$ admits exactly $n$ half-geodesics. We will first show that each manifold in the sequence $\{M_{2n,i}\}_{i=1}^{\infty}$ constructed above admits at least $n$ half-geodesics, and then that only finitely many of the $M_{2n,i}$ can admit more than $n$ half-geodesics.

We argue that the $n$ meridians of $M_{2n,i}$, those curves which pass through the center of each face and perpendicularly through parallel cylindrical edges, are half-geodesics. Indeed, because the smoothing of the $M_{2n,i}$ occurs only in a neighborhood of the spherical regions, the same ellipse argument as in the proof of Proposition~\ref{polygon} applies to show that meridians are half-geodesics.

We now show that only finitely many of the $M_{2n,i}$ can admit a non-meridian half-geodesic. By contradiction assume that there exists a subsequence (again called $M_{2n,i}$) such that each $M_{2n,i}$ admits a non-meridian half-geodesic $\gamma_i \colon S^1 \to M_{2n,i}$. Then by Theorem~\ref{conv2} a subsequence of the $\gamma_i$ converge point-wise to a meridian half-geodesic $\gamma \colon S^1 \to X_{2n}$ (note by Proposition~\ref{no_short_closed} that $\gamma$ can not be the trivial curve). We use the fact that the lengths of the $\gamma_i$ are uniformly bounded to conclude that the $\gamma_i$ converge uniformly to $\gamma$. Thus for $i$ sufficiently large the $\gamma_i$ are the concatenation of straight-line segments on each face passing perpendicularly through parallel cylindrical edges. As in the proof of Proposition~\ref{polygon} such a non-meridian half-geodesic $\gamma_i$ contains a pair of off-center points between which it fails to minimize. We conclude that these $\gamma_i$ are not half-geodesics and therefore that only finitely many of the $M_{2n,i}$ can admit more than $n$ half-geodesics.
\end{proof}

\begin{prop}\label{ellipsoid2} There exists a sequence $M_i$ of triaxial ellipsoids, each admitting exactly one half-geodesic, which converge smoothly to the standard Riemannian sphere, all of whose geodesics are half-geodesics.
\end{prop}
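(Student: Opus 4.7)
The plan is to let $M_i$ be the triaxial ellipsoid $\{x^2/a_i^2+y^2/b_i^2+z^2/c_i^2=1\}$ with $a_i>b_i>c_i$ all tending to $1$. Then $M_i\to S^2$ smoothly, and on $S^2$ every great circle is a half-geodesic. Each $M_i$ carries the three principal ellipses $E_i^{ab},E_i^{ac},E_i^{bc}$ as closed geodesics (fixed sets of reflection through the coordinate planes), and $L_i^{bc}<L_i^{ac}<L_i^{ab}$ since ellipse perimeter is strictly increasing in each semi-axis. The goal is to show that for all sufficiently large $i$, $E_i^{bc}$ is the unique half-geodesic on $M_i$.

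First I would rule out $E_i^{ab}$ and $E_i^{ac}$ by exhibiting antipodal pairs lying on two principal ellipses at once. The points $p=(0,b_i,0)$ and $q=(0,-b_i,0)$ are antipodal on both $E_i^{ab}$ and $E_i^{bc}$, so $d_{M_i}(p,q)\le L_i^{bc}/2<L_i^{ab}/2$, showing $E_i^{ab}$ fails to minimize between its own antipodes. The pair $(0,0,c_i),(0,0,-c_i)$, antipodal on both $E_i^{ac}$ and $E_i^{bc}$, eliminates $E_i^{ac}$ the same way.

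Next I would show that $E_i^{bc}$ itself \emph{is} a half-geodesic. For antipodal $p,q$ on $E_i^{bc}$ the upper bound $d_{M_i}(p,q)\le L_i^{bc}/2$ is free since the half-arc is a geodesic. For the reverse inequality I would use first-order perturbation theory: for $i$ large any minimizing geodesic from $p$ to $q$ on $M_i$ is a small deformation of one of the great semicircles on $S^2$ joining the limit points, and these form a one-parameter family parameterized by the tilt angle $\theta$ of their plane about the line $\overline{pq}$. A direct computation shows that for $a_i>b_i>c_i$ the half-perimeter of the $\theta$-plane section of $M_i$ is strictly increasing as $\theta$ moves from the $bc$-plane toward the $ab$-plane (since $\partial P/\partial A>0$ in each semi-axis $A$); so the $bc$-plane section minimizes, and since it happens to be an exact geodesic this gives $d_{M_i}(p,q)=L_i^{bc}/2$.

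Finally I would rule out any other half-geodesic for large $i$. Any half-geodesic $\gamma_i$ has length $\le 2\,\mathrm{diam}(M_i)\to 2\pi$ and, by a Cheeger lower bound as in Proposition~\ref{no_short_closed}, is bounded below away from $0$; so by Theorem~\ref{conv2} a subsequence of $\gamma_i$ converges to a half-geodesic on $S^2$, i.e.\ a great circle $\gamma$. A Morse--Lyusternik--Schnirelmann argument on the free loop space then forces $\gamma_i$ to be a principal ellipse for large $i$: the round-sphere Morse--Bott critical submanifold $\mathbb{RP}^2$ of unoriented great circles splits under generic small perturbation into exactly the three isolated critical circles predicted by $\mathrm{cat}(\mathbb{RP}^2)=3$, namely the principal ellipses of $M_i$; hence $\gamma$ is a coordinate-plane great circle and, by the previous two steps, $\gamma_i=E_i^{bc}$. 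The hardest step is this last one -- a rigorous verification that no other closed geodesic on $M_i$ of length near $2\pi$ can appear; a classical alternative is to invoke the Jacobi integrability of the geodesic flow on a triaxial ellipsoid via elliptic coordinates, which parameterizes all short closed geodesics explicitly and reduces the required length comparisons to elementary manipulations of elliptic integrals.
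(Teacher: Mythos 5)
Your middle step---eliminating the two principal ellipses containing the longest axis by exhibiting antipodal pairs that also lie on the shortest principal ellipse---is exactly the paper's argument. The genuine gap is in your final step, which is the crux of the proof: showing that no closed geodesic other than the three coordinate sections can be a half-geodesic. The Lyusternik--Schnirelmann/Morse--Bott argument you invoke cannot deliver this: category and Betti-number estimates give \emph{lower} bounds on the number of closed geodesics surviving a perturbation, never the assertion that the critical manifold $\mathbb{RP}^2$ breaks up into \emph{exactly} three circles; a priori the perturbed flow could carry many further closed geodesics of length near $2\pi$. You correctly flag this as the hardest step but leave it unresolved. The paper closes it by quoting a theorem of Morse \cite[Theorem 109]{Ber}: for every $L$ there is $\epsilon>0$ such that every periodic geodesic on a triaxial ellipsoid with axes in $(1,1+\epsilon)$, other than the three sections by the coordinate planes, has length greater than $L$. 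Since a half-geodesic has length at most $2\,\mathrm{diam}(M_i)\to 2\pi$, this reduces the problem at once to the three principal ellipses, with no need for the Cheeger bound of Proposition~\ref{no_short_closed} or the compactness argument of Theorem~\ref{conv2}.

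Your argument that $E_i^{bc}$ \emph{is} a half-geodesic also does not work as written. Comparing half-perimeters of the plane sections through $p$ and $q$ only bounds $d_{M_i}(p,q)$ from \emph{above} (each section is a curve joining $p$ to $q$), whereas you need a lower bound, and the minimizing geodesic from $p$ to $q$ is not a plane section---its length is $\le$ that of any nearby section, which is the wrong direction. Moreover the perturbation problem is degenerate: $p$ and $q$ limit to antipodal points of the round sphere, which are conjugate along every connecting great semicircle, so the ``small deformation of a great semicircle'' picture and the attendant length expansions require second-order control, uniform in the tilt angle $\theta$, that the sketch does not supply (near $\theta=0$ the gain $L(\theta)-L(0)$ is of order $\theta^2\epsilon_i$ and competes with the error terms). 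To be fair, the paper's own proof simply asserts that the remaining coordinate section is a half-geodesic, so on this point you are no worse off than the paper---but the justification you propose establishes only the trivial inequality.
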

\begin{proof} 

We apply the following result by Morse \cite[Theorem 109]{Ber}: given any L (think of L as very large) there is an $\epsilon > 0$ such that any triaxial ellipsoid with axis lengths satisfying $1<a<b<c< 1+\epsilon$ has all of its periodic geodesics of length larger than L except for the three sections by the coordinate planes. Choose a sequence $\epsilon_i \to 0$ and let $M_i$ be a triaxial ellipsoid with axis lengths satisfying $1<a_i<b_i<c_i< 1+\epsilon_i$. We have that the only short closed geodesics on $M_i$ are the three sections by the coordinate planes. Two of these sections will contain the largest axis and will fail to minimize between the antipodal points on their second axis; the other coordinate section provides a shorter path between these antipodes. The remaining coordinate section is the only half-geodesic. 
\end{proof}

\begin{proof}[Proof of Theorem~\ref{exact}] For $n=0$ we have the result of Theorem~\ref{none}. For $n=1$ we have the result of Proposition~\ref{ellipsoid2}. For $n \geq 2$ we have the result of Proposition~\ref{even}.
\end{proof}

%
%
\section{Acknowledgements} 
The author would like to thank Carolyn Gordon and Craig Sutton for their guidance through all stages of the research process. The author would also like to thank Christina Sormani for suggesting the original problem and for helpful discussions and direction. Finally the author would like to thank the referee for helpful suggestions in shortening the paper.

\nocite{*}
\bibliographystyle{plain}
\bibliography{polygon}

\end{document}